\documentclass[11pt]{article}
\usepackage[letterpaper,left=1in,right=1in,top=1in,bottom=1in]{geometry}
\usepackage[mathscr]{eucal}
\usepackage{epsfig,epsf,psfrag}
\usepackage{amssymb,amsfonts,latexsym}
\usepackage{amsmath}
\usepackage{graphicx}
\usepackage{epstopdf}
\usepackage{bm,xcolor,url}
\usepackage{fixltx2e}
\usepackage{array}
\usepackage{verbatim}
\usepackage[noend]{algpseudocode}
\usepackage{cite}
\usepackage{algorithm}
\usepackage{verbatim}
\usepackage{textcomp}
\usepackage{mathrsfs}
\usepackage[referable]{threeparttablex}
\usepackage{subfig}
\usepackage{amsthm}
\usepackage{enumerate}
\usepackage{footnote}
\usepackage{enumitem}
\usepackage{xr}
\usepackage{mathtools}
\catcode`~=11 \def\UrlSpecials{\do\~{\kern -.15em\lower .7ex\hbox{~}\kern .04em}} \catcode`~=13 

\allowdisplaybreaks[3]

\newcommand{\tnorm}[1]{{\left\vert\kern-0.25ex\left\vert\kern-0.25ex\left\vert #1 
    \right\vert\kern-0.25ex\right\vert\kern-0.25ex\right\vert}}
\newcommand{\tnormt}[1]{{\vert\kern-0.25ex\vert\kern-0.25ex\vert #1 
    \vert\kern-0.25ex\vert\kern-0.25ex\vert}}

\newcommand{\normt}[1]{\Vert#1\Vert}

\newcommand{\nn}{\nonumber}

\newcommand{\dom}{\mathsf{dom}\,}

\newcommand{\ri}{\mathsf{ri}\,}
\newcommand{\where}{\mathrm{where}}

\newcommand{\andd}{\mathrm{and}}

\newcommand{\diam}{\mathsf{diam}}
\newcommand{\dist}{\mathsf{dist}}

\newcommand{\bartheta}{\bar{\theta}}

\newcommand{\calC}{\mathcal{C}}

\newcommand{\calU}{\mathcal{U}}

\newcommand{\calX}{\mathcal{X}}

\newcommand{\rmd}{\mathrm{d}}

\newcommand{\bbR}{\mathbb{R}}

\newcommand{\barbbR}{\bar{\bbR}}

\DeclareMathAlphabet{\mathbsf}{OT1}{cmss}{bx}{n}

\newcommand{\tilG}{\widetilde{G}}

\newcommand{\uG}{\underline{G}}

\newcommand{\ceil}[1]{\lceil{#1}\rceil}

\newcommand{\ip}[2]{\left\langle{#1},{#2}\right\rangle}
\newcommand{\ipt}[2]{\langle{#1},{#2}\rangle}

\newcommand{\lea}{\stackrel{\rm(a)}{\le}}

\newcommand{\lb}{\stackrel{\rm(b)}{<}}

\DeclareMathOperator*{\argmin}{arg\,min}

\DeclareMathOperator{\st}{s.t.}

\newtheorem{theorem}{Theorem} 
\newtheorem*{theorem*}{Theorem}
\newtheorem{lemma}{Lemma}

\newtheorem{corollary}{Corollary}
 
\newtheorem*{assump*}{Assumption}

\theoremstyle{definition}
\newtheorem{definition}{Definition}

\theoremstyle{remark}
\newtheorem{remark}{Remark}

\newcommand{\qednew}{\nobreak \ifvmode \relax \else
      \ifdim\lastskip<1.5em \hskip-\lastskip
      \hskip1.5em plus0em minus0.5em \fi \nobreak
      \vrule height0.75em width0.5em depth0.25em\fi}

\title{On the Convergence Rate of the Duality Gap in the Generalized Frank-Wolfe Method Under the Weak Growth Condition}
\usepackage[colorlinks=true,breaklinks=true,bookmarks=true,urlcolor=blue,
     citecolor=blue,linkcolor=blue,bookmarksopen=false,draft=false]{hyperref}
\author{Renbo Zhao
\thanks{Tippie College of Business, University of Iowa, Iowa City, IA 52242 ({mailto:  renbo-zhao@uiowa.edu}).}
}

\usepackage{algorithm}
\newcommand{\gap}{{\mathsf{gap}}}
\newcommand{\wg}{{\mathsf{wgap}}}
\newcommand{\subopt}{{\mathsf{subopt}}}
\numberwithin{equation}{section}
\numberwithin{definition}{section}
\numberwithin{prop}{section}
\numberwithin{lemma}{section}
\numberwithin{theorem}{section}
\numberwithin{remark}{section}
\numberwithin{corollary}{section}
\newcommand{\Pena}{{Pe{\~n}a}\;}

\begin{document}

\maketitle

\begin{abstract}
We analyze the convergence rate of certain sequence of the duality gaps in the generalized Frank-Wolfe method, under certain regime of the weak growth condition proposed in \Pena~\cite{Pena_23}. Our analysis leverages a recursion lemma that may be of independent interest.
\end{abstract}

\section{Introduction} \label{sec:intro}

We consider the generalized Frank-Wolfe (FW) method for the solving the following convex composite minimization problem
\begin{equation}\label{eq:P}
P^*:= {\min}_{x\in \bbR^n}\, \{P(x):=f(x)+ \Psi(x)\} \tag{P}
\end{equation}
where $f:\bbR^n\to \barbbR:=\bbR\cup\{+\infty\}$ and $\Psi:\bbR^n\to \barbbR$ are proper, closed and convex (p.c.c.) functions. We assume that 
\begin{enumerate}[label=(A\arabic*)]
\item \label{assump:f_diff} $f$ is differentiable on an open set $\calU\subseteq \bbR^n$ that contains $\dom\Psi:=\{x\in\bbR^n:\Psi(x)<+\infty\}$.
\item \label{assump:LMO} For any $g\in \bbR^n$, the following optimization problem 
\begin{equation}
 {\min}_{x\in \bbR^n}\,\{\ip{g}{x}+\Psi(x)\}, \tag{LM} \label{eq:LMO}
\end{equation}
admits an easily computable optimal solution. 
\end{enumerate}
In the literature, the oracle that outputs an solution of~\eqref{eq:LMO} is often called  
the {\em (generalized) linear minimization oracle}.

 The traditional format of the FW method (e.g.,~\cite{Frank_56,Jaggi_13,Freund_16}) solves~\eqref{eq:P}  where $\Psi$ is the indicator function of some (nonempty) compact convex set, 
 and later works (e.g.,~\cite{Bach_15,Nest_18,Ghad_19}) generalize this method by allowing $\Psi$ to be any p.c.c.\ function such that both Assumptions~\ref{assump:f_diff} and~\ref{assump:LMO} hold. 
 The convergence rate of the generalized FW method (shown in Algorithm~\ref{algo:CG}) has been studied under various settings --- see e.g.,~\cite{Frank_56,Levitin_66,Jaggi_13,Bach_15,Garber_15,Freund_16,Nest_18,Xu_18,Ghad_19,Kerd_21}.
Some of the most attractive features of this method are its affine invariance, norm-independence, and avoidance on projection mappings. 

\begin{algorithm}[t]
\caption{The Generalized FW Method}\label{algo:CG}
\begin{algorithmic}[1]
	\State {\bf Input:}  $x_{0}\in \dom\Psi$, $\varepsilon>0$
	\For{$k=0,1,2,\dots$}
		\State Compute $g_k:=\nabla f(x_k)$, $s_{k} \in \argmin_{x\in \bbR^n}\{\ip{g_k}{x} + \Psi(x)\}$ and  $\theta_k \in [0,1]$
		\State Compute $G_k:=\ip{g_k}{x_k-s_k} + \Psi(x_k) - \Psi(s_k)$
		\If{$G_k\le \varepsilon$}
		\State {\bf Stop}
		\Else 
		\State Compute $x_{k+1} := (1-\theta_k) x_k + \theta_k s_k$  
		\EndIf
	\EndFor
\end{algorithmic}
\end{algorithm}

  In the recent seminal work~\cite{Pena_23}, Pe{\~n}a proposed two general growth conditions, namely the ``strong growth condition'' and ``weak growth condition'', that subsume and extend many of the settings studied previously. 
   More importantly, these two conditions are  affine-invariant and norm-independent, 
   and based on these two conditions, \Pena developed convergence-rate analyses of the generalized FW method that also enjoy the same desirable properties. Specifically, 
  under the strong growth condition, the convergence rates of both  the sequence of (primal)  sub-optimality gaps and certain sequence of  duality gaps were obtained. In contrast, under the weak growth condition, only the convergence rate of the sequence of  sub-optimality gaps was obtained  (and perhaps,  this explains the name ``weak''). In this paper, we shall derive the convergence rate of certain sequence of duality gaps under certain regime of the $(q,r)$-weak growth condition (which we detail in Section~\ref{sec:weak_growth}). Indeed,  when the objective function in the dual problem of~\eqref{eq:P} can be easily evaluated, any form of the sequence of duality gaps can be easily computed. Since by weak duality, any duality gap upper bounds the sub-optimality gap, the duality gap 
  plays an important role in algorithmic termination. As such, obtaining the convergence rate of (certain sequence of) the duality gaps is an important aspect in the analysis of the generalized FW method.
  
  To provide an overview of our results, let us note that at each iteration $k\ge 0$ in Algorithm~\ref{algo:CG}, the quantity $G_k$ is precisely the duality gap between $x_k$ and $g_k$ (cf.~\eqref{eq:wg_duality_gap_k} in Section~\ref{sec:prelim}), and Algorithm~\ref{algo:CG} is terminated once $G_k$ becomes sufficiently small (i.e., $G_k\le \varepsilon$). Let $K$ be the index of the iteration immediately before Algorithm~\ref{algo:CG} stops, and our results essentially show that under proper choices of $\theta_k$ and the regime that $r+q>2$, we have 
  \begin{equation}
G_k^{\rm best} = O\big(k^{-\frac{q-1}{1-r}}\big), \quad\where\quad G_k^{\rm best}:={\min}_{i=0,\ldots,k}\,G_i, \quad \forall\,0\le k\le K. \label{eq:def_G_k_best}
\end{equation}
(Indeed, $\{G_k^{\rm best}\}_{0\le k \le K}$  
denotes the sequence of ``best'' duality gaps, and can be readily obtained from Algorithm~\ref{algo:CG} without additional function evaluations.) 
It turns out that in our analysis, we leverage a recursion lemma (cf.~Lemma~\ref{lem:rate_G_k}) that may be of independent interest. 
In addition, when specializing our results to the specific scenarios studied in~\cite{Garber_15,Xu_18,Kerd_21},  we show that our results also provide convergence rates of $\{G_k^{\rm best}\}_{0\le k \le K}$  to these works 
(cf.~Corollary~\ref{cor:exact_LS}).  
  

\section{Preliminaries}\label{sec:prelim}

Let $f^*:\bbR^n\to\barbbR$ denote the Fenchel conjugate of $f$, namely 
\begin{equation}
f^*(u) := {\sup}_{x\in \bbR^n}\, \ipt{u}{x} - f(x). 
\end{equation}
Then the Fenchel dual of~\eqref{eq:P} reads
\begin{equation}\label{eq:D}
-d^*:= -{\min}_{u\in \bbR^n}\, \{d(u):=\Psi^*(-u)+ f^*(u)\}.  \tag{D} 
\end{equation}
Note that Assumption~\ref{assump:f_diff} 
amounts to assuming that $\partial \Psi^*(-g)\ne \emptyset$ for all $g\in\bbR^n$, which in turn amounts to assuming that $\dom \Psi^* = \bbR^n$. 
This, together with Assumption~\ref{assump:LMO}, indicates that both $\ri \dom f \cap\ri\dom \Psi\ne \emptyset$ and $(-\ri \dom f^*) \cap\ri\dom \Psi^*\ne \emptyset$, and hence by the Fenchel duality theorem (cf.~\cite[Theorem~31.1]{Rock_70}), we know that $-\infty < P^* = -d^* <+\infty$, and both~\eqref{eq:P} and~\eqref{eq:D} have at least one optimal solution. For convenience, we denote the set of optimal solution of~\eqref{eq:P} by $\calX^*$. Since the function $P$ is closed and convex, we know that $\calX^*\ne\emptyset$ is closed and convex. 
%
%

Given $x\in \dom \Psi$, 
let 
\begin{equation}
g:=\nabla f(x) \quad \andd \quad s\in\partial \Psi^*(-g).  \label{eq:def_gs}
\end{equation}
 Then we have 
\begin{equation}
f(x) + f^*(g) + \Psi(s) + \Psi^*(-g) = \ip{g}{x-s}. \label{eq:FW_identity}
\end{equation}
The {\em Wolfe gap} at $x$, denoted by $\wg(x)$, is defined as 
\begin{equation}
\wg(x):= \ip{g}{x-s} + \Psi(x) - \Psi(s).
\end{equation}
(Note that by the definition of $s$,  $\wg(x)$ is independent of the choice of $s\in\partial \Psi^*(-g)$.) 
For any $x,u\in\bbR^n$, define the duality gap at $(x,u)$ as 
\begin{equation}
\gap(x,u) := P(x) + d(u) =  f(x)+\Psi(x) + f^*(u)+\Psi^*(-u). 
\end{equation}
Clearly, the duality gap upper bounds both the primal and dual sub-optimality gaps, i.e., 
\begin{equation}
\gap(x,u) = P(x) - P^* + d(u) - d^*\ge \max\{P(x) - P^*, d(u) - d^*\}. 
\end{equation}
Also, note that by~\eqref{eq:FW_identity}, we have  
\begin{equation}
 \wg(x)=\gap(x,g), \label{eq:wg_duality_gap}
\end{equation}
and consequently, in Algorithm~\ref{algo:CG}, we have
\begin{equation}
 G_k = \wg(x_k)=\gap(x_k,g_k), \quad \forall\,k\ge 0.  \label{eq:wg_duality_gap_k}
\end{equation}

For our later analysis, we need to state the following simple fact. For convenience, let us define 
\begin{equation}
\subopt(x) := P(x) - P^*. 
\end{equation}

\begin{lemma}
Let $x\in \dom\Psi, g = \nabla f(x),$ and $s\in \partial \Psi^*(-g)$. The following three statements are equivalent:
\begin{enumerate}[label=(\roman*)]
\item \label{item:wg=0} $\wg(x) = 0$ 
\item \label{item:subopt=0} $\subopt(x) = 0$
\item \label{item:x_opt} $x$ is an optimal solution of~\eqref{eq:P}
\end{enumerate}
\end{lemma}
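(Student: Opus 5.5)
(ii)$\Leftrightarrow$(iii) holds by definition, since $\subopt(x)=P(x)-P^*$ and $P^*$ is the optimal value of~\eqref{eq:P}, which is finite. So the work is in showing (i)$\Leftrightarrow$(ii). For this I will use the duality-gap identity~\eqref{eq:wg_duality_gap}, namely $\wg(x)=\gap(x,g)=\subopt(x)+(d(g)-d^*)$, together with the first-order characterization of the linear minimization subproblem.

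\textbf{(i)$\Rightarrow$(ii).} Since $P^*=-d^*$, we have $\gap(x,g)=\subopt(x)+(d(g)-d^*)$, and both terms are nonnegative. Hence $\wg(x)=0$ forces $\subopt(x)=0$.

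\textbf{(iii)$\Rightarrow$(i).} Let $x\in\calX^*$. By convexity of $f$ and differentiability on $\calU\supseteq\dom\Psi$, for any $y\in\dom\Psi$ and $t\in(0,1]$ we have $x+t(y-x)\in\dom\Psi$ and
\[
0\le \frac{P(x+t(y-x))-P(x)}{t}\le \frac{f(x+t(y-x))-f(x)}{t}+\Psi(y)-\Psi(x),
\]
using convexity of $\Psi$ for the second term. Letting $t\downarrow 0$ gives $\ip{g}{y-x}+\Psi(y)-\Psi(x)\ge 0$ for all $y\in\dom\Psi$. Taking $y=s$, which lies in $\dom\Psi$ because $s$ solves~\eqref{eq:LMO} and $\Psi^*(-g)$ is finite, yields $\wg(x)\le 0$.

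On the other hand, $\wg(x)\ge 0$ always holds. Indeed, $s$ minimizes $\ip{g}{\cdot}+\Psi$, so $\ip{g}{s}+\Psi(s)\le\ip{g}{x}+\Psi(x)$; alternatively this follows because $\wg(x)$ equals a duality gap. Combining the two inequalities gives $\wg(x)=0$.

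The only delicate point is the directional-derivative limit. It requires $x\in\ri$-type feasibility of the segment, which holds because $\dom\Psi$ is convex and contained in $\calU$, where $f$ is differentiable. Finiteness of $\Psi(s)$ needs a brief justification, and it follows from $\dom\Psi^*=\bbR^n$ and $s\in\partial\Psi^*(-g)$.
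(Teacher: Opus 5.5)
Your proof is correct and follows the paper's route. You get (i)$\Rightarrow$(ii) from $\wg(x)=\gap(x,g)\ge\subopt(x)\ge 0$, (ii)$\Rightarrow$(iii) is trivial, and (iii)$\Rightarrow$(i) combines the optimality inequality $\ip{g}{s-x}+\Psi(s)-\Psi(x)\ge 0$ with $\wg(x)\ge 0$, exactly as the paper does.

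The only difference is that you derive that inequality by hand from a one-sided difference quotient along the segment $[x,s]\subseteq\dom\Psi\subseteq\calU$, whereas the paper cites the first-order condition $-g\in\partial\Psi(x)$, which implicitly relies on the subdifferential sum rule. Your version is therefore slightly more self-contained, and no relative-interior consideration is actually needed.
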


\begin{proof}
By~\eqref{eq:wg_duality_gap}, we have~\ref{item:wg=0} $\Rightarrow$~\ref{item:subopt=0}. That~\ref{item:subopt=0} $\Rightarrow$~\ref{item:x_opt} is obvious. Now suppose that~\ref{item:x_opt} holds. 
By first-order optimality condition,  we know that $-g \in \partial \Psi(x)$, and hence 
\begin{equation}
\Psi(s)\ge \Psi(x) - \ipt{g}{s-x} \quad \Longleftrightarrow\quad   \wg(x) = \ip{g}{x-s} + \Psi(x) - \Psi(s)\le 0.
\end{equation}
Since $\wg(x)\ge 0$,  we know that $\wg(x)= 0$. 
\end{proof}

Let  $D_f:\calU\times\calU\to \bbR$ denote the {\em Bregman divergence} of $f$, i.e., 
\begin{equation}
D_f(y,x) = f(y) - f(x) - \ip{\nabla f(x)}{y-x}, \quad \forall\, x,y\in\calU. 
\end{equation}
From~\eqref{eq:FW_identity}, we know that
\begin{equation}\label{eq:basic.gap}
D_f(s,x)  =  f(s) -f(x) - \ip{g}{s-x} = \gap(s,g). 
\end{equation}
Based on $D_f$, we can define $D:\dom\Psi\times \dom\Psi \times[0,1]\rightarrow \bbR$ as follows: 
\begin{align}\label{eq:def.D}
D(x,s,\theta):=D_f(x+\theta(s-x),x) + \Psi(x+\theta(s-x)) - (1-\theta)\Psi(x) - \theta \Psi(s).
\end{align}
The function $D$ plays a pivotal role in the analytic framework developed in~\cite{Pena_23}. 
The importance is demonstrated in the following lemma, which is a restatement of~\cite[Proposition~1]{Pena_23}, wherein it was called the ``fundamental gap reduction identity''.

\begin{lemma}\label{lem:gap.reduction}
Let $x\in \dom\Psi, g = \nabla f(x),$ and $s\in \partial \Psi^*(-g)$.  Then for all $\theta\in[0,1]$, we have
\begin{equation}\label{eq:gap.reduction}
\subopt(x+\theta(s-x)) - \subopt(x) = -\theta\,\gap(x,g) + D(x,s,\theta).
\end{equation}
\end{lemma}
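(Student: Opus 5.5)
Write $y:=x+\theta(s-x)$. Since $x,s\in\dom\Psi$ and $\dom\Psi$ is convex, $y\in\dom\Psi\subseteq\calU$. So every quantity in~\eqref{eq:gap.reduction} is finite and $D_f(y,x)$ is well defined. The identity is purely algebraic, so the plan is to expand both sides using the definitions of $\subopt$, $D_f$, $D$ and $\gap(x,g)$, and check that they agree. The only external input is the identification $\gap(x,g)=\wg(x)$ from~\eqref{eq:wg_duality_gap}. That identity rests on~\eqref{eq:FW_identity}, i.e., on $s\in\partial\Psi^*(-g)$ and $g=\nabla f(x)$, through the Fenchel--Young equalities $f(x)+f^*(g)=\ipt{g}{x}$ and $\Psi(s)+\Psi^*(-g)=-\ipt{g}{s}$.

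First, the left-hand side. The constant $P^*$ cancels, so
\begin{equation*}
\subopt(y)-\subopt(x) = f(y)-f(x)+\Psi(y)-\Psi(x).
\end{equation*}
Next, the right-hand side. By~\eqref{eq:wg_duality_gap}, $-\theta\,\gap(x,g) = -\theta\ipt{g}{x-s}-\theta\Psi(x)+\theta\Psi(s)$. By the definitions of $D_f$ and of $D$ in~\eqref{eq:def.D},
\begin{equation*}
D(x,s,\theta) = f(y)-f(x)-\theta\ipt{g}{s-x} + \Psi(y)-(1-\theta)\Psi(x)-\theta\Psi(s).
\end{equation*}
Adding these two expressions, the inner-product terms $-\theta\ipt{g}{x-s}$ and $-\theta\ipt{g}{s-x}$ cancel. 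The $\Psi(s)$ terms also cancel. The $\Psi(x)$ terms combine to $-\theta\Psi(x)-(1-\theta)\Psi(x)=-\Psi(x)$. What is left is exactly $f(y)-f(x)+\Psi(y)-\Psi(x)$, which is the left-hand side.

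There is no real obstacle here. The only step needing care is the justification that $\gap(x,g)=\wg(x)$, i.e., correctly invoking~\eqref{eq:FW_identity} with the sign conventions $s\in\partial\Psi^*(-g)$. The rest is checking finiteness (all points lie in $\dom\Psi$, where $\Psi$ is finite and $f$ is differentiable by Assumption~\ref{assump:f_diff}) and bookkeeping the cancellations.
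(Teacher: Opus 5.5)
Your proof is correct: once $\gap(x,g)$ is replaced by $\wg(x)=\ipt{g}{x-s}+\Psi(x)-\Psi(s)$ via~\eqref{eq:wg_duality_gap}, the identity is a pure algebraic expansion, and your bookkeeping of the cancellations is right. You use $s\in\dom\Psi$ implicitly; it follows from $-g\in\partial\Psi(s)$. The paper gives no proof of its own here and simply cites the lemma as Proposition~1 of~\cite{Pena_23}; your direct computation is the standard verification of that identity.
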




\section{Rate Analysis of $\{G_k^{\rm best}\}_{0\le k \le K}$ Under the Weak Growth Condition}
 
 In this section, we first formally introduce the weak growth condition in~\cite{Pena_23} and make a few remarks. Next, we present  a key recursion lemma that lays the foundation of our analyses. Finally, we analyze the convergence rate of $\{G_k^{\rm best}\}_{0\le k \le K}$ 
 in Algorithm~\ref{algo:CG} under two choices of $\theta_k$: one via exact line search and one via Armijo's rule. 
 
\subsection{The Weak Growth Condition} \label{sec:weak_growth}

The strong and weak growth conditions in~\cite{Pena_23} aim to upper bound the quantity $D(x,s,\theta)$ in~\eqref{eq:def.D} in different ways. Since we focus on the latter condition, we formally introduce it below. 

\begin{definition}[Weak Growth Condition]\label{def:weak.growth.cond}
We say that the 
{\em $(q,r)$-weak growth condition} holds on~\eqref{eq:P}, where $q>1$ and $r\in[0,1)$, if there exists a finite $M \ge 0$ such that for all $x \in \dom\Psi, \, g:=\nabla f(x)$ and $s\in\partial \Psi^*(-g)$, 
\begin{equation}\label{eq:weak.growth.cond}
D(x,s,\theta) \; \subopt(x)^{1-r} \le \frac{M\theta^{q}}{q}\cdot {\gap}(x,g), \quad \forall\, \theta \in [0,1].
\end{equation} 
\end{definition}

Let use make a few remarks about Definition~\ref{def:weak.growth.cond} below. 

\begin{remark}
Note that if $M= 0$, then for any $x\in \dom\Psi\setminus\calX^*,\, g:=\nabla f(x)$, $s\in\partial \Psi^*(-g)$ and $\theta \in [0,1]$, we have $D(x,s,\theta)\le 0$. Thus by taking $\theta=1$ in~\eqref{eq:gap.reduction}, we have  
\begin{align}
\subopt(s) = \subopt(x)  -\,\gap(x,g) + D(x,s,1)\le 0,
\end{align}
and hence $s\in \calX^*$. This indicates that in Algorithm~\ref{algo:CG}, no matter which $x_{0}\in \dom\Psi$ is chosen, we either have $x_0\in \calX^*$ or $s_0\in \calX^*$. 
Thus, to avoid this trivial case,  we shall assume that $M>0$ in the sequel. 
\end{remark}

\begin{remark}
Note that the weak growth condition generalizes and extends the special scenarios studied in the previous works~\cite{Garber_15,Xu_18,Kerd_21}. More importantly, note that we can write 
\begin{equation}
M:= \sup_{x\in \dom \Psi\setminus \calX^*}\;\sup_{\theta\in(0,1]}\;\frac{D(x,s,\theta)\,\subopt(x)^{1-r}}{(\theta^{q}/q)\,{\gap}(x,g)}, 
\end{equation} 
and hence $M$ is affine-invariant and norm-independent. As remarked in~\cite{Pena_23}, the analysis of Algorithm~\ref{algo:CG} based on the weak growth condition, if done properly, is not only affine-invariant and norm-independent, 
but also more general and 
at least as sharp as the norm-dependent results in the previous works~\cite{Garber_15,Xu_18,Kerd_21}.
\end{remark}

\begin{remark}
Note that when $r=1$, we effectively have the $(q,1)$-strong growth condition (cf.~\cite[Eqn.~(10)]{Pena_23}), under which the convergence behavior of Algorithm~\ref{algo:CG} has been thoroughly studied in~\cite{Pena_23}. Since our focus is genuinely on the  weak growth condition, we exclude this case from our discussions. 
\end{remark}


\subsection{A Key Recursion Lemma}

\begin{lemma} \label{lem:rate_G_k}
Let $\alpha, p>0$, $\{G_k\}_{0\le k\le  K}$ and $\{h_k\}_{0\le k\le  K}$ be 
positive sequences (where $K\ge 1$) and $h_{K+1}\ge 0$ such that 
\begin{align}
 h_k - h_{k+1}\ge \alpha G_k h_k^p, \quad G_k\ge h_k,\quad \forall\, 0\le k\le  K. 
 \label{eq:recur1}
\end{align}
Then we have 
\begin{equation}
h_k\le (h_0^{-p} + p\alpha k)^{-1/p},\quad \forall\, 0\le k\le  K.\label{eq:rate_h_k}
\end{equation}
Define $\uG_k:= \min_{i=\ceil{(k+1)/2},\cdots,k} G_i$ for $0\le k\le  K$.  
If $p\in(0,1)$, then 
\begin{equation}
G_0 \le \frac{h_0^{1-p}}{\alpha(1-p)}\quad \andd \quad\min_{i=0,\cdots,k} G_i\le \uG_k \le \left(\frac{2}{\alpha p}\right)^{1/p}\frac{p}{(1-p)k^{1/p}}, \quad\forall\, 1\le  k\, \le K. \label{eq:rate_uG_k}
\end{equation}
\end{lemma}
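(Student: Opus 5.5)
The plan is to handle the two claims separately: turn the recursion into a telescoping bound on $h_k^{-p}$ for the first claim, and on $h_k^{1-p}$ for the second.

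\textbf{Rate of $h_k$.} Combining the two inequalities in \eqref{eq:recur1} gives $h_k - h_{k+1}\ge \alpha G_k h_k^p \ge \alpha h_k^{p+1}$. In particular $h_{k+1}\le h_k$, so the sequence is nonincreasing. For $0\le k\le K-1$ both $h_k$ and $h_{k+1}$ are positive. Since $t\mapsto t^{-p}$ is convex on $(0,\infty)$, we have
\begin{equation*}
h_{k+1}^{-p}-h_k^{-p}\ \ge\ p\,h_k^{-p-1}(h_k-h_{k+1})\ \ge\ p\alpha .
\end{equation*}
Summing this over $0,\dots,k-1$ gives $h_k^{-p}\ge h_0^{-p}+p\alpha k$ for $0\le k\le K$, which is \eqref{eq:rate_h_k}. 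The index $k=K+1$, where $h_{K+1}$ may vanish, is never needed here.

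\textbf{Bound on $G_0$.} From $h_0-h_1\ge \alpha G_0 h_0^p$ and $h_1\ge 0$ we get $G_0\le h_0^{1-p}/\alpha$. Since $1-p\in(0,1)$, this is at most $h_0^{1-p}/(\alpha(1-p))$. The inequality $\min_{i\le k}G_i\le \uG_k$ is trivial, because the minimum defining $\uG_k$ runs over a subset of $\{0,\dots,k\}$.

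\textbf{Bound on $\uG_k$.} This is the main step. Summing the raw recursion would produce a ratio $h_m/h_k^p$, which is badly behaved when $h_k$ is small, so I would avoid it. Instead, set $m:=\ceil{(k+1)/2}$. For $m\le i\le k$ we have $h_i-h_{i+1}\ge \alpha\,\uG_k\,h_i^p$. Now use concavity of $t\mapsto t^{1-p}$ on $[0,\infty)$; this remains valid when $i=K$ and $h_{K+1}=0$. It gives
\begin{equation*}
h_i^{1-p}-h_{i+1}^{1-p}\ \ge\ (1-p)h_i^{-p}(h_i-h_{i+1})\ \ge\ (1-p)\alpha\,\uG_k .
\end{equation*}
Telescoping from $i=m$ to $k$ and dropping $h_{k+1}^{1-p}\ge 0$ yields $h_m^{1-p}\ge (k-m+1)(1-p)\alpha\,\uG_k$.

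\textbf{Finishing.} From $m\le (k+2)/2$ we get $k-m+1\ge k/2$, and $m\ge k/2$ holds trivially. Apply \eqref{eq:rate_h_k} at index $m$, dropping $h_0^{-p}$, to get $h_m^{1-p}\le (p\alpha k/2)^{-(1-p)/p}$. Therefore
\begin{equation*}
\uG_k\le \Big(\frac{2}{p\alpha k}\Big)^{\frac{1-p}{p}}\frac{2}{(1-p)\alpha k}=\Big(\frac{2}{\alpha p}\Big)^{1/p}\frac{p}{(1-p)k^{1/p}},
\end{equation*}
which is \eqref{eq:rate_uG_k}.
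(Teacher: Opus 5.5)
Your proof is correct and is essentially the paper's argument: your tangent-line inequalities for the convex function $t^{-p}$ and the concave function $t^{1-p}$ are exactly the paper's integral comparisons via $\int_{h_{k+1}}^{h_k}x^{-(p+1)}\,\rmd x$ and $\int_{h_{k+1}}^{h_k}x^{-p}\,\rmd x$. These are followed by the same telescoping over $i=\ceil{(k+1)/2},\dots,k$ and the same use of \eqref{eq:rate_h_k} at $m=\ceil{(k+1)/2}$; bounding each $G_i$ below by $\uG_k$ before rather than after summing is only cosmetic. The one detail you leave implicit is that $m\le k\le K$ for $k\ge 1$, which is what allows you to apply \eqref{eq:rate_h_k} at index $m$.
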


\begin{remark}\label{rmk:recur}
Several remarks are in order. First, if $p=0$ in~\eqref{eq:recur1}, then $\alpha<1$ and we have 
\begin{equation}
h_k\le (1-\alpha)^k h_0,\quad \forall\, 0\le k\le  K+1, \quad\andd\quad G_k\le \alpha^{-1} (1-\alpha)^k h_0 ,\quad \forall\, 0\le k\le  K. 
\end{equation}
Second, if $p\ge 1$ in~\eqref{eq:recur1}, then $\{\uG_k\}_{0\le k\le  K}$  may not decrease at all. 
 For example, let $h_k = 2^{-(k+1)}$ and $G_k = 1/2$ for all $0\le k\le  K$, then clearly~\eqref{eq:recur1} is satisfied with $\alpha = 1$, but $\uG_k = 1/2$ for all $0\le k\le  K$. Third, we can take $K=+\infty$ in Lemma~\ref{lem:rate_G_k}, and all the results still hold. 
\end{remark}


\begin{proof}[Proof of Lemma~\ref{lem:rate_G_k}]
Note that~\eqref{eq:recur1} implies that
\begin{equation}
h_k - h_{k+1}\ge \alpha h_k^{p+1}, \quad \forall\, 0\le k\le  K. 
\end{equation}
From~\cite[Lemma 4.1]{Borwein_14}, we know that~\eqref{eq:rate_h_k} holds. For completeness, we state the proof below. Fix any $0\le k\le K$. Since both $h_k,h_{k+1}>0$, we have 
\begin{align}
\alpha\le h_k^{-(p+1)}(h_k - h_{k+1})\le  \int_{h_{k+1}}^{h_k}\, x^{-(p+1)}\;\rmd x = \frac{h_{k+1}^{-p} - h_{k}^{-p}}{p}, \quad \forall\, 0\le k\le  K-1. 
\end{align}
Summing over $i=0,\ldots,k$, we have 
\begin{align}
h_{k+1}^{-p}\ge h_{0}^{-p} +p\alpha (k+1) \quad \Longrightarrow\quad  h_{k+1}\le (h_{0}^{-p} +p\alpha (k+1) )^{^{-1/p}}. 
\end{align}
This shows~\eqref{eq:rate_h_k}. 
Now,  let $p\in(0,1)$, and 
we have  
\begin{equation}
G_k \le \frac{1}{\alpha} h_k^{-p}({h_k - h_{k+1}})\le \frac{1}{\alpha} \int_{h_{k+1}}^{h_k}\, x^{-p}\;\rmd x = \frac{h_k^{1-p} - h_{k+1}^{1-p}}{\alpha (1-p)},  \quad \forall\, 0\le k\le  K. \label{eq:G_k_ub}
\end{equation}
This shows the first part of~\eqref{eq:rate_uG_k}. Next, fix any $1\le k\le K$. 
Summing over $i=\ceil{(k+1)/2},\ldots,k$, we have 
\begin{align}
({k}/{2}) \uG_k\lea (k - \ceil{(k-1)/2}+1)  \uG_k\le \sum_{i=\ceil{(k+1)/2}}^{k} G_i \le \frac{h_{\ceil{(k+1)/2}}^{1-p} - h_{k+1}^{1-p}}{\alpha (1-p)}\le \frac{h_{\ceil{(k+1)/2}}^{1-p} }{\alpha (1-p)}, 
\end{align}
where in (a) we use $\ceil{(k+1)/2}\le k/2+1$. Since $\ceil{(k+1)/2}\le \ceil{(K+1)/2}\le K$ for all $K\ge 1$, 
by~\eqref{eq:rate_h_k}, we have 
\begin{align}
\uG_k \le \frac{2(h_0^{-p} + p\alpha \ceil{(k+1)/2})^{-(1-p)/p}}{\alpha (1-p)k}\le \frac{2( p\alpha (k+1)/2)^{-(1-p)/p}}{\alpha (1-p)k}\le   
\left(\frac{2}{\alpha p}\right)^{1/p}\frac{p}{(1-p)k^{1/p}}.\nn
\end{align}
This shows the second part of~\eqref{eq:rate_uG_k}.
\end{proof}

Now, let us turn our attention back to Algorithm~\ref{algo:CG}. As mentioned in Section~\ref{sec:intro}, $K$ denotes the index of the iteration  immediately before Algorithm~\ref{algo:CG} terminates, i.e.,
\begin{equation}
K:= \max\{k\ge 0: G_k > \varepsilon\}. 
\end{equation}
To avoid triviality, we assume that $K\ge 1$. 
For convenience, we also define 
\begin{equation}
h_k:= \subopt(x_k)\quad \andd\quad G_k:= \gap(x_k,g_k), \quad \forall\, k\ge 0.  
\end{equation}
The goal of the rest of this section is to provide convergence rate guarantees for 
$\{G_k^{\rm best}\}_{0\le k \le K}$ 
based on the weak growth condition in Definition~\ref{def:weak.growth.cond}. 

\subsection{Choosing $\theta_k$ via Exact Line Search}

We first consider choosing $\theta_k$ via exact line search on the primal objective function $P$. 
In Algorithm~\ref{algo:CG}, at each iteration $k\ge 0$, we choose $\theta_k\in[0,1]$ as follows: 
\begin{equation}
\theta_k := {\argmin}_{\theta\in[0,1]}\;\; P(x_k+\theta(s_k-x_k)). \label{eq:exact_LS}
\end{equation}

\begin{theorem} \label{thm:rate_exact_LS}
Let the {$(q,r)$-weak growth condition} hold on~\eqref{eq:P} for some $q>1$ and $r\in[0,1)$. 
In Algorithm~\ref{algo:CG}, let $\theta_k$ be chosen according to~\eqref{eq:exact_LS}.
For any $0\le k\le  K$, we have 
\begin{equation}
h_{k+1} - h_k\le \; -\left( 1-\frac{1}{q} \right) \min\bigg\{1,\bigg(\frac{h_k^{1-r}}{M}\bigg)^{\frac{1}{q-1}}\bigg\}\, G_k.  \label{eq:recur1_h_k_G_k}
\end{equation}
Consequently, define 
\begin{equation}
k_0:= \max\big\{0\le k\le K:h_k^{1-r}/M> 1 \big\}. 
\end{equation}
Then we have the following convergence rate guarantee on $\{h_k\}_{0\le k \le K}$: 
\begin{align}
&h_k \le q^{-k} h_0, \quad \forall\, 0\le k\le k_0+1,\label{eq:rate_h_k_1st}\quad\andd\quad\\
 &h_k\le \left(h_{k_0+1}^{-\frac{1-r}{q-1}} + \frac{ 1-r }{q M^{\frac{1}{q-1}}} (k-(k_0+1))\right)^{-\frac{q-1}{1-r}},\quad \forall\, k_0+2\le  k\le K.\label{eq:rate_h_k_2nd_thm}
\end{align}
In addition,  we have the following convergence rate guarantee on $\{G_k\}_{0\le k \le K}$:
\begin{align}
&G_k\le \left( 1-{1}/{q} \right)^{-1}q^{-k}h_0, \quad \forall\, 0\le k\le k_0,
\end{align}
and if $r+q>2$, then we have 
\begin{align}
&G_{k_0+1} \le q^{-(k_0+1)\frac{r+q-2}{q-1}+1}\;\frac{M^{\frac{1}{q-1}} }{q+r-2}h_0^{\frac{r+q-2}{q-1}},\quad \andd \quad\\
 & \min_{i=k_0+1,\cdots,k} G_i\le \left(\frac{2q M^{\frac{1}{q-1}}}{1-r}\right)^{\frac{q-1}{1-r}}\frac{1-r}{(q+r-2)(k-(k_0+1))^{\frac{q-1}{1-r}}}, \quad\forall\, k_0+2\le  k\, \le K. \label{eq:rate_uG_k_2nd}
\end{align}
\end{theorem}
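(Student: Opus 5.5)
The proof has three stages: derive the one-step inequality \eqref{eq:recur1_h_k_G_k} from Lemma~\ref{lem:gap.reduction} and the weak growth condition, handle the linear phase $k\le k_0$ directly, and then feed the sublinear phase into Lemma~\ref{lem:rate_G_k}.

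\textbf{One-step inequality.} Fix $0\le k\le K$. Since $G_k>\varepsilon>0$, we have $x_k\notin\calX^*$ and hence $h_k>0$. By exact line search, $h_{k+1}\le \subopt(x_k+\theta(s_k-x_k))$ for every $\theta\in[0,1]$. Lemma~\ref{lem:gap.reduction} together with \eqref{eq:weak.growth.cond} (divided by $h_k^{1-r}>0$) then gives
\[
h_{k+1}-h_k\le -\theta G_k+\frac{M\theta^q}{q h_k^{1-r}}G_k .
\]
Minimizing $-\theta+a\theta^q/q$ over $[0,1]$, with $a:=M/h_k^{1-r}$, gives two cases:
\begin{itemize}
\item If $a\le 1$, take $\theta=1$; the value is $-1+a/q\le -(1-1/q)$.
\item Otherwise take $\theta=a^{-1/(q-1)}<1$; the value is $-(1-1/q)a^{-1/(q-1)}$.
\end{itemize}
Together these give \eqref{eq:recur1_h_k_G_k}.

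\textbf{Linear phase.} For $k\le k_0$ we have $h_k\ge h_{k_0}$ (the $h_k$ are monotone by line search), and $h_{k_0}^{1-r}>M$, so the minimum in \eqref{eq:recur1_h_k_G_k} equals $1$. Using $G_k\ge h_k$ (from $\gap\ge\subopt$) we get $h_{k+1}\le h_k/q$, which yields \eqref{eq:rate_h_k_1st}. The bound $G_k\le (1-1/q)^{-1}(h_k-h_{k+1})\le(1-1/q)^{-1}h_k\le(1-1/q)^{-1}q^{-k}h_0$ follows.

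\textbf{Sublinear phase.} For $k\ge k_0+1$ we have $h_k^{1-r}\le M$, so
\[
h_k-h_{k+1}\ge \alpha G_k h_k^{p}, \qquad \alpha=\frac{1-1/q}{M^{1/(q-1)}},\quad p=\frac{1-r}{q-1}.
\]
Then $p\,\alpha=\frac{1-r}{qM^{1/(q-1)}}$, which matches \eqref{eq:rate_h_k_2nd_thm}. Apply Lemma~\ref{lem:rate_G_k} to the shifted sequences $\tilde h_j=h_{k_0+1+j}$ and $\tilde G_j=G_{k_0+1+j}$, which are positive for $j\le K-k_0-1$. If $K-k_0-1\ge1$ this gives \eqref{eq:rate_h_k_2nd_thm}; the case of length $0$ needs a trivial separate check.

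When $r+q>2$ we have $p<1$ and $1-p=\frac{q+r-2}{q-1}$. The first part of \eqref{eq:rate_uG_k} gives
\[
G_{k_0+1}\le \frac{h_{k_0+1}^{1-p}}{\alpha(1-p)}=\frac{q\,M^{1/(q-1)}}{q+r-2}\,h_{k_0+1}^{\frac{q+r-2}{q-1}},
\]
and inserting $h_{k_0+1}\le q^{-(k_0+1)}h_0$ gives the stated bound. The second part, with $k$ replaced by $k-(k_0+1)$, reads
\[
\Big(\frac{2}{\alpha p}\Big)^{1/p}\frac{p}{1-p}=\Big(\frac{2qM^{1/(q-1)}}{1-r}\Big)^{\frac{q-1}{1-r}}\frac{1-r}{q+r-2},
\]
which is \eqref{eq:rate_uG_k_2nd}.

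\textbf{Main obstacle.} The hard part is the bookkeeping at the phase boundary. If the $h_k^{1-r}>M$ set is empty, $k_0$ must be read as $-1$. Lemma~\ref{lem:rate_G_k} needs a shifted horizon of at least $1$, which should be fine since the rate statements only require $k\ge k_0+2$. One also needs $h_{K+1}\ge0$, which is automatic.
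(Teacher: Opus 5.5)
Your proposal is correct and follows the paper's proof essentially step for step. You derive the one-step bound from the gap-reduction identity and the weak growth condition, minimized at $\theta_*=\min\{1,(h_k^{1-r}/M)^{1/(q-1)}\}$; you get the linear phase from $G_k\ge h_k$; and you apply Lemma~\ref{lem:rate_G_k} to the shifted sequences with $\alpha=(1-1/q)M^{-1/(q-1)}$ and $p=(1-r)/(q-1)$. You are in fact slightly more careful than the paper about the boundary cases: an empty index set for $k_0$, and a shifted horizon of length zero, where the bound on $G_{k_0+1}$ follows directly from the integral estimate.
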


\begin{proof}
Fix any $0\le k\le  K$, so that both $h_k,G_k>0$. 
By~\eqref{eq:gap.reduction} and~\eqref{eq:exact_LS}, we have 
\begin{equation}\label{eq:gap.reduction_k}
\subopt(x_k+\theta(s_k-x_k)) - \subopt(x_k) = -\theta\,\gap(x_k,g_k) + D(x_k,s_k,\theta).
\end{equation}
Therefore, we have 
\begin{align}
h_{k+1} - h_k=\;&\subopt(x_{k+1}) - \subopt(x_k)\\
=\;&{\min}_{\theta\in[0,1]}\;\subopt(x_k+\theta(s_k-x_k)) - \subopt(x_k)\\
 =\; &{\min}_{\theta\in[0,1]}\; -\theta\,\gap(x_k,g_k) + D(x_k,s_k,\theta)\\
 \lea \; &{\min}_{\theta\in[0,1]}\; -\theta\,G_k +   \frac{M\theta^{q}}{q}\frac{G_k}{h_k^{1-r}}\\
  =\; &\theta_*G_k\left( -1 +   \frac{M\theta_*^{q-1}}{qh_k^{1-r}}\right), \quad \where \;\;\theta_* := \min\bigg\{1,\bigg(\frac{h_k^{1-r}}{M}\bigg)^{\frac{1}{q-1}}\bigg\}\\
  \le \; &-\left( 1-\frac{1}{q} \right) \min\bigg\{1,\bigg(\frac{h_k^{1-r}}{M}\bigg)^{\frac{1}{q-1}}\bigg\}\, G_k \lb 0,
\end{align}
where in (a) follows from~\eqref{eq:weak.growth.cond} and in (b) follows from $q>1$. This shows~\eqref{eq:recur1_h_k_G_k}. Also, we know that $\{h_k\}_{0\le k\le  K}$ is strictly decreasing. We then have the following: 
\begin{enumerate}[label=\roman*)]
\item For all $0\le k\le k_0$, we have $h_k^{1-r}/M> 1$, and hence 
\begin{align}
&h_{k+1} - h_k\le \; -\left( 1-{1}/{q} \right)  G_k\le -\left( 1-{1}/{q} \right)  h_k\\
 \quad\Longrightarrow \quad & h_{k+1}\le h_k/q \quad\andd\quad  G_k\le \left( 1-{1}/{q} \right)^{-1}(h_k-h_{k+1})\le \left( 1-{1}/{q} \right)^{-1}h_k. 
\end{align}
Therefore, we have 
\begin{equation}
h_k \le q^{-k} h_0, \quad \forall\, 0\le k\le k_0+1,\quad\andd\quad G_k\le \left( 1-{1}/{q} \right)^{-1}q^{-k}h_0, \quad \forall\, 0\le k\le k_0. 
\end{equation}
\item For all $k_0+1\le k\le K$, we have $h_k^{1-r}/M\le  1$,  and hence
\begin{align}
h_{k+1} - h_k\ge  \alpha h_k^{p}\, G_k,\quad \where\quad \alpha:= \frac{ 1-{q}^{-1} }{M^{\frac{1}{q-1}}},\;\; p:=\frac{1-r}{q-1}. \label{eq:recur_h_k_G_k}
\end{align}
Therefore, the positive sequences $\{G_k\}_{k_0+1\le k\le  K}$ and $\{h_k\}_{k_0+1\le k\le  K}$, together with $h_{K+1}\ge 0$, satisfy~\eqref{eq:recur1} for $k_0+1\le k\le  K$. 
Thus by~\eqref{eq:rate_h_k} in Lemma~\ref{lem:rate_G_k}, we have
\begin{equation}
h_k\le \Big(h_{k_0+1}^{-p} + p\alpha (k-(k_0+1))\Big)^{-1/p},\quad \forall\, k_0+1\le k\le K. \label{eq:rate_h_k_2nd} 
\end{equation}
Since $r<1$ and $r+q>2$, we know that $ p\in(0,1)$, and by~\eqref{eq:rate_uG_k} in Lemma~\ref{lem:rate_G_k}, we have 
\begin{align}
&G_{k_0+1} \le \frac{h_{k_0+1}^{1-p}}{\alpha(1-p)}\le \frac{q^{-(k_0+1)(1-p)}h_0^{1-p}}{\alpha(1-p)}\quad \andd \quad\\
 & \min_{i=k_0+1,\cdots,k} G_i\le \left(\frac{2}{\alpha p}\right)^{1/p}\frac{p}{(1-p)(k-(k_0+1))^{1/p}}, \quad\forall\, k_0+2\le  k\, \le K. \label{eq:rate_G_k_2nd} 
\end{align}
\end{enumerate}
Substituting the values of $\alpha$ and $p$ into~\eqref{eq:rate_h_k_2nd} to~\eqref{eq:rate_G_k_2nd}, we then finish the proof. 
\end{proof}

\begin{remark}
Note that from~\eqref{eq:rate_h_k_1st}, we can have an explicit estimate of $k_0$, that is 
\begin{equation}
k_0 \le \min\left\{\left\lceil\frac{q}{q-1}\ln\left(\frac{h_0}{M^{1/(1-r)}}\right)\right\rceil-1,\;K\right\}.
\end{equation}
\end{remark}

\begin{remark}
Note that by the definition of $G_k^{\rm best}$ in~\eqref{eq:def_G_k_best} and also~\eqref{eq:wg_duality_gap_k},  we have 
\begin{equation}
\begin{split}
G_k^{\rm best}&:={\min}_{i=0,\ldots,k}\,G_i = {\min}_{i=0,\ldots,k}\,P(x_i) + d(g_i)\\
&\qquad \ge {\min}_{i=0,\ldots,k}\,P(x_i) + {\min}_{i=0,\ldots,k}\,d(g_i) = P(x_k) + {\min}_{i=0,\ldots,k}\,d(g_i):=\tilG_k, \label{eq:G_best_lb}
\end{split}
\end{equation}
where the last equality follows from the monotonicity of $\{h_k\}_{k\ge 0}$ (cf.~\eqref{eq:recur1_h_k_G_k}). 
Note that the convergence rate of $\{\tilG_k\}_{0\le k\le K}$ was analyzed under the {\em strong growth condition} in~\cite{Pena_23}. From~\eqref{eq:G_best_lb}, it is clear that our convergence rate guarantees for $\{G_k^{\rm best}\}_{0\le k\le K}$, which is derived under the weak growth condition, also holds for $\{\tilG_k\}_{0\le k\le K}$. 
\end{remark}

\begin{remark}
Note that the $O(k^{-\frac{q-1}{1-r}})$ convergence rate on $\{h_k\}_{0\le k \le K}$ has already been derived in~\cite[Theorem~3]{Pena_23}. 
Our convergence rate guarantee on $\{G_k\}_{0\le k \le K}$ complements this result, by showing that 
the sequence of ``best'' duality gaps $\{G_k^{\rm best}\}_{0\le k \le K}$ 
has the same convergence rate  of $O(k^{-\frac{q-1}{1-r}})$, but only under the regime where $r+q>2$. Indeed, the requirement that $r+q>2$ stems from the requirement that $p\in(0,1)$ in Lemma~\ref{lem:rate_G_k}, which provides the (worst-case) convergence rate of $\{G_k^{\rm best}\}_{0\le k \le K}$ based on the recursion in~\eqref{eq:recur_h_k_G_k}. As discussed in Remark~\ref{rmk:recur}, this requirement cannot be dropped unless we have additional information about the relationship between 
$\{h_k\}_{0\le k \le K}$ and $\{G_k\}_{0\le k \le K}$. 
In fact, it is interesting to investigate the behavior of $\{G_k^{\rm best}\}_{0\le k \le K}$ (or some other sequences of duality gaps) in the regime where $r+q\le 2$, 
and we leave this to future work. 
\end{remark}

\subsection{Choosing $\theta_k$ via Armijo's Rule} 

We next consider choosing $\theta_k$ via Armijo's Rule on the primal objective function $P$.
In Algorithm~\ref{algo:CG}, fix an iteration $0\le k\le K$, so that $G_k>\varepsilon$ and $x_k\ne s_k$. 
Define the function $\varphi:[0,1]\rightarrow \bbR$ as 
\begin{equation}
\varphi(\theta):= P(x_k+\theta(s_k-x_k)), 
\quad\forall\,\theta\in [0,1]. \label{eq:def_phi} 
\end{equation}
Clearly, $\varphi(0) = P(x_k)$. 
We define the right derivative of $\varphi$ at zero as 
\begin{equation}
\varphi'_+(0) := \lim_{\theta\downarrow0}\, \frac{\varphi(\theta) - \varphi(0)}{\theta} = \inf_{\theta\in(0,1]}\, \frac{\varphi(\theta) - \varphi(0)}{\theta}, \label{eq:def_phi'} 
\end{equation}
where the second equality follows from the convexity of $\varphi$.
By the definition of $\varphi$ in~\eqref{eq:def_phi}, we have 
\begin{align}
\varphi'_+(0) &= \lim_{\theta\downarrow0}\, \frac{f(x_k+\theta(s_k-x_k)) - f(x_k)}{\theta} + \lim_{\theta\downarrow0}\, \frac{\Psi(x_k+\theta(s_k-x_k)) - \Psi(x_k)}{\theta}\\
&\le \ipt{g_k}{s_k-x_k} + \Psi(s_k) - \Psi(x_k)\\
& = -\wg(x_k) = -\gap(x_k,g_k) = -G_k<0, 
\end{align}
where the first inequality follows from the convexity of $\Psi$. As such, we know that for any $\beta\in(0,1)$, 
\begin{align}
\varphi'_+(0)\le -G_k < -\beta G_k<0,  
\end{align}
and by~\eqref{eq:def_phi'}, we know that there exist $\tilde\theta\in(0,1]$ such that 
\begin{align}
\frac{\varphi(\tilde\theta) - \varphi(0)}{\tilde\theta} < -\beta G_k \quad \Longleftrightarrow\quad  \varphi(\tilde\theta) -  \varphi(0)<  - \beta \tilde\theta G_k . 
\end{align}
This, together with the closedness an convexity of $\varphi$, suggests that 
\begin{equation}
\{\theta\in[0,1]:\varphi(\theta) -  \varphi(0)\le   - \beta \theta G_k \}=[0,\bartheta]\quad \mbox{for some }\;0<\bartheta\le 1. 
\end{equation}
Thus 
 it is natural for us to employ Armijo's rule 
 to choose $\theta_k\in[0,1]$ such that 
\begin{equation}
P(x_{k+1}) - P(x_k)=\varphi(\theta_k) - \varphi(0) \le -\beta \theta_kG_k \quad \andd \quad c\, \bartheta< \theta_k \le \bartheta\quad \mbox{for some }\;c\in(0, 1).  \label{eq:theta_k_crit}
\end{equation} 
Based on this criterion, 
we have the following result. 

\begin{theorem} \label{thm:rate_Armijo}
Let the {$(q,r)$-weak growth condition} hold on~\eqref{eq:P} for some $q>1$ and $r\in[0,1)$. 
In Algorithm~\ref{algo:CG}, let $\theta_k$ be chosen via Armijo's rule so that~\eqref{eq:theta_k_crit} holds. 
For any $0\le k\le  K$, we have 
\begin{equation}
h_{k+1} - h_k \le -\beta c\min\bigg\{1,\bigg(\frac{q(1-\beta)h_k^{1-r}}{M}\bigg)^{\frac{1}{q-1}}\bigg\} G_k.  \label{eq:recur2_h_k_G_k}
\end{equation}
Consequently, define 
\begin{equation}
k_0:= \max\big\{0\le k\le K:q(1-\beta)h_k^{1-r}/M> 1 \big\}. 
\end{equation}
Then we have the following convergence rate guarantee on $\{h_k\}_{0\le k \le K}$: 
\begin{align}
&h_k \le (1-\beta c)^{k} h_0, \quad \forall\, 0\le k\le k_0+1,\quad\andd\quad\\
 &h_k\le \left(h_{k_0+1}^{-\frac{1-r}{q-1}} + \frac{ \beta c (1-r) }{q-1 } \left(\frac{q(1-\beta)}{M}\right)^{\frac{1}{q-1}} (k-(k_0+1))\right)^{-\frac{q-1}{1-r}},\quad \forall\, k_0+2\le  k\le K.
\end{align}
In addition,  we have the following convergence rate guarantee on $\{G_k\}_{0\le k \le K}$:
\begin{align}
&G_k\le ( \beta c)^{-1}(1-\beta c)^{k}h_0, \quad \forall\, 0\le k\le k_0,
\end{align}
and if $r+q>2$, then we have 
\begin{align*}
&G_{k_0+1} \le (1-\beta c)^{(k_0+1)\frac{r+q-2}{q-1}}\;\left(\frac{M}{q(1-\beta)}\right)^{\frac{1}{q-1}}\frac{ q-1}{(q+r-2)\beta c} h_0^{\frac{r+q-2}{q-1}},\quad \andd \quad\\
 & \min_{i=k_0+1,\cdots,k} G_i\le \left( \frac{M}{q(1-\beta)}\right)^{\frac{1}{1-r}} \left(\frac{2(q-1) }{\beta c (1-r)}\right)^{\frac{q-1}{1-r}}\frac{1-r}{(q+r-2)(k-(k_0+1))^{\frac{q-1}{1-r}}}, \quad\forall\, k_0+2\le  k\, \le K. 
\end{align*}
\end{theorem}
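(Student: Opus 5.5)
The proof follows the template of Theorem~\ref{thm:rate_exact_LS}. The only new ingredient is a lower bound on the Armijo step $\theta_k$, obtained from the weak growth condition.

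\textbf{Step 1: lower-bound $\bartheta$.} Fix $0\le k\le K$, so that $h_k,G_k>0$. By Lemma~\ref{lem:gap.reduction} and~\eqref{eq:weak.growth.cond}, for every $\theta\in[0,1]$,
\begin{equation*}
\varphi(\theta)-\varphi(0)\le -\theta G_k+\frac{M\theta^q}{q}\frac{G_k}{h_k^{1-r}}.
\end{equation*}
The right-hand side is at most $-\beta\theta G_k$ whenever $M\theta^{q-1}/(q h_k^{1-r})\le 1-\beta$, that is, whenever
\begin{equation*}
\theta\le \hat\theta_k:=\min\Big\{1,\Big(\tfrac{q(1-\beta)h_k^{1-r}}{M}\Big)^{\frac{1}{q-1}}\Big\}.
\end{equation*}
The Armijo acceptance set is exactly $[0,\bartheta]$, so $\hat\theta_k\le\bartheta$. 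Combined with~\eqref{eq:theta_k_crit}, this gives $\theta_k>c\bartheta\ge c\hat\theta_k$. Hence $h_{k+1}-h_k\le -\beta\theta_kG_k\le -\beta c\,\hat\theta_k G_k$, which is~\eqref{eq:recur2_h_k_G_k}. In particular, $\{h_k\}$ is strictly decreasing. This comparison of the model bound with the Armijo set is the only genuinely new step, and I expect it to be the main point requiring care. The subtlety is that the acceptance set must be an interval $[0,\bartheta]$, which follows from convexity of $\varphi$ as established before the theorem.

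\textbf{Step 2: the linear phase, $0\le k\le k_0$.} Here $\hat\theta_k=1$. Using $G_k\ge h_k$, which holds by weak duality since $G_k=\gap(x_k,g_k)\ge h_k$, we get $h_{k+1}\le(1-\beta c)h_k$. Iterating gives $h_k\le(1-\beta c)^kh_0$ for all $k\le k_0+1$. We also get $G_k\le(\beta c)^{-1}(h_k-h_{k+1})\le(\beta c)^{-1}h_k$, which yields the bound on $G_k$ for $k\le k_0$.

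\textbf{Step 3: the sublinear phase, $k_0+1\le k\le K$.} Here $\hat\theta_k=(q(1-\beta)h_k^{1-r}/M)^{1/(q-1)}$. This gives recursion~\eqref{eq:recur1} with
\begin{equation*}
\alpha:=\beta c\Big(\tfrac{q(1-\beta)}{M}\Big)^{\frac{1}{q-1}},\qquad p:=\frac{1-r}{q-1}.
\end{equation*}
We apply Lemma~\ref{lem:rate_G_k} to the shifted sequences starting at index $k_0+1$, together with $h_{K+1}\ge0$ and $G_k\ge h_k$. Bound~\eqref{eq:rate_h_k} gives the stated rate for $h_k$. When $r+q>2$ we have $p\in(0,1)$, and~\eqref{eq:rate_uG_k} gives two bounds. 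The first is $G_{k_0+1}\le h_{k_0+1}^{1-p}/(\alpha(1-p))$, which we combine with $h_{k_0+1}\le(1-\beta c)^{k_0+1}h_0$ from Step 2. The second is the bound on $\min_{i=k_0+1,\dots,k}G_i$ in terms of $(2/(\alpha p))^{1/p}p/((1-p)(k-k_0-1)^{1/p})$.

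\textbf{Step 4: substitution.} Finally, substitute $\alpha$ and $p$, using $1-p=(q+r-2)/(q-1)$, $1/p=(q-1)/(1-r)$, and $p/(1-p)=(1-r)/(q+r-2)$. Then $(1/\alpha)^{1/p}$ becomes $(\beta c)^{-(q-1)/(1-r)}(M/(q(1-\beta)))^{1/(1-r)}$, which matches the stated constants; this is routine bookkeeping.
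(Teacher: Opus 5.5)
Your proposal is correct and follows the paper's proof essentially step for step. You lower-bound $\bartheta$ by $\min\{1,(q(1-\beta)h_k^{1-r}/M)^{1/(q-1)}\}$ using the weak-growth upper bound on $\varphi(\theta)-\varphi(0)$, combine it with $\theta_k>c\bartheta$ to get the recursion, and then run the same two-phase argument as in Theorem~\ref{thm:rate_exact_LS} via Lemma~\ref{lem:rate_G_k} with $\alpha=\beta c(q(1-\beta)/M)^{1/(q-1)}$ and $p=(1-r)/(q-1)$; the constant bookkeeping in your Step 4 matches the stated bounds, and you spell out details that the paper leaves to ``the same reasoning.''
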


\begin{proof}
Fix any $0\le k\le  K$, so that both $h_k,G_k>0$. 
Note that by~\eqref{eq:theta_k_crit}, we have 
\begin{equation}
h_{k+1} - h_k =  p(x_{k+1}) - p(x_k) \le -\beta \theta_k G_k \le -\beta c\bar \theta G_k.\label{eq:h_ub}
\end{equation}
From~\eqref{eq:gap.reduction_k} and~\eqref{eq:weak.growth.cond}, we have 
\begin{equation}
\varphi(\theta) -  \varphi(0) = \subopt(x_k+\theta(s_k-x_k)) - \subopt(x_k) \le -\theta\,G_k +   \frac{M\theta^{q}}{q}\frac{G_k}{h_k^{1-r}}. \label{eq:varphi_ub}
\end{equation}
Therefore, 
\begin{align}
\bar\theta &= \max\{\theta\in[0,1]:\varphi(\theta) -  \varphi(0)\le   - \beta \theta G_k \}\\
&\ge \max\left\{\theta\in[0,1]:-\theta\,G_k +   \frac{M\theta^{q}}{q}\frac{G_k}{h_k^{1-r}}\le   - \beta \theta G_k \right\} = \min\bigg\{1,\bigg(\frac{q(1-\beta)h_k^{1-r}}{M}\bigg)^{\frac{1}{q-1}}\bigg\}.\label{eq:bar_theta_lb}
\end{align}
By combining~\eqref{eq:h_ub} and~\eqref{eq:bar_theta_lb}, we have~\eqref{eq:recur2_h_k_G_k}. 
Based on~\eqref{eq:recur2_h_k_G_k}, we can leverage Lemma~\ref{lem:rate_G_k} and use the same reasoning as in the proof of Theorem~\ref{thm:rate_exact_LS} to finish the proof.
\end{proof}

\noindent
From Theorem~\ref{thm:rate_Armijo}, we see that even if $\theta_k$ is chosen via Armijo's rule (instead of  exact line search), the sequence of ``best'' duality gaps $\{G_k^{\rm best}\}_{0\le k \le K}$ still has the same convergence rate  of $O(k^{-\frac{q-1}{1-r}})$. 

%

\section{An Important Scenario Satisfying the Weak Growth Condition}

In the last part of this paper, we introduce one important scenario where the weak growth condition holds. To that end, we need to introduce some definitions first. 

\begin{definition}[Uniformly Convex Set]\label{def:uniformly_cvx_set}
Let $\normt{\cdot}$ be a norm on $\bbR^n$. 
A closed set $\calC\subseteq\bbR^n$ is {\em $(\mu,p)$-uniformly convex} w.r.t.\ $\normt{\cdot}$, 
where $\mu>0$ and $p\ge 2$, if for all $x,y\in \calC$ and  $\theta \in [0,1]$, we have 
\begin{equation}\label{eq.unif.convex.set}
(1-\theta)x+\theta y + \frac{\mu}{p}\theta(1-\theta)\|y-x\|^p z \in \calC,\quad \forall\,z\in\bbR^n\;\st\;  \|z\|\le 1. 
\end{equation}
\end{definition}

\begin{remark}
Note that in Definition~\ref{def:uniformly_cvx_set}, we require $p\ge 2$, as opposed to some literature (e.g.,~\cite[Definition~1.1]{Kerd_21}) that only requires $p> 0$. This is because one can show that if $n\ge 2$ and $\calC\ne \bbR^n$ contains at least two points, then we must have $p\ge 2$. In addition, note that for any $p>1$, we either have $\calC= \bbR^n$ or $\calC$ is compact. Therefore, if $p\ge 2$ and $s\ne 0$, then $\argmin_{x\in\calC}\, \ipt{s}{x}\ne \emptyset$ if and only if $\calC$ is compact. 
Let $x^*\in \argmin_{x\in\calC}\, \ipt{s}{x}$, and we know that (see e.g.,~\cite[Lemma~2.1]{Kerd_21})
\begin{equation}
\ipt{s}{x-x^*}\ge \frac{\mu}{2p}\normt{s}_*\|x-x^*\|^p, \quad \forall\, x\in \calC. 
\end{equation}
In particular, if $s\ne 0$ and $x^*$ exists, then it 
must be unique. 
\end{remark}

\begin{definition}[Uniformly Smooth Function]
Let $\normt{\cdot}$ be a norm on $\bbR^n$. 
We call the function $f$ {\em $(L,q)$-uniformly smooth} on $\calU$ w.r.t.\ $\|\cdot\|$, where  $q\in(1,2]$ and $L>0$, if 
\begin{equation}\label{eq.unif.smooth}
f(x+\theta(y-x)) \ge (1-\theta)f(x) + \theta f(y) - \frac{L}{q} \theta(1-\theta) \|y-x\|^q, \quad \forall\, x,y\in \calU,\;\; \forall\,\theta \in [0,1]. 
\end{equation}
Note when $q=2$, we typically call $f$ {\em $L$-smooth} on $\calU$. 
 \end{definition}

Before proceeding, let us introduce some notations. Let $\calC\subseteq \bbR^n$ be a nonempty closed convex set  and $\normt{\cdot}$ be a norm on $\bbR^n$. We let  $\iota_\calC$ denote the indicator function of $\calC$, namely $\iota_\calC(x) = 0$ for $x\in\calC$ and $\iota_\calC(x) = +\infty$ otherwise. 
Also, let $\diam_{\normt{\cdot}}(\calC)$ denote the diameter of $\calC$ w.r.t.\ $\normt{\cdot}$, namely $$\diam_{\normt{\cdot}}(\calC):={\sup}_{x,y\in \calC}\, \|x-y\|.$$
Given $x\in\bbR^n$, define its distance to $\calX^*$ w.r.t.\ $\normt{\cdot}$ as $$\dist_{\normt{\cdot}}(x,\calX^*):=  {\inf}_{y\in \calX^*} \,\|x-y\|.$$

\begin{definition}[H\"olderian Error Bound]
Let $\calC\subseteq \bbR^n$ be a nonempty closed convex set  and $\normt{\cdot}$ be a norm on $\bbR^n$. Let $\Psi=\iota_\calC$ in~\eqref{eq:P}, which then becomes $P^*=\min_{x\in\calC} f(x)$. 
We say that $f$ 
satisfies the $(\rho,\gamma)$-H\"olderian error bound on $\calC$ w.r.t.\ $\normt{\cdot}$, where $\rho>0$  and $\gamma \in [0,1]$, if 
\begin{equation}\label{eq.heb}
\dist_{\normt{\cdot}}(x,\calX^*)\le \rho  (f(x) - P^*)^\gamma,\quad\forall\, x\in\calC.  
\end{equation} 
 \end{definition}
%

The following lemma is a combination of Proposition~2 and Proposition~5 in~\cite{Pena_23}. 
 
\begin{lemma}\label{lem:weak_growth}
Let $\calC\subseteq \bbR^n$ be a nonempty convex compact set and $\Psi = \iota_\calC$. Also, let  $f$ be $(L,q)$-uniformly smooth on $\calC$ w.r.t.\ some norm $\normt{\cdot}$ for some $L>0$ and $q \in (1,2]$. Then 
\begin{equation}\label{eq:r=0}
D(x,s,\theta) \le \frac{M\theta^{q}}{q},\quad\forall\, \theta \in [0,1], \qquad \where\;\; M=L\,\diam_{\normt{\cdot}}(\calC)^q.  
\end{equation}
Additionally, if $f$ satisfies the $(\rho,\gamma)$-H\"olderian error bound~\eqref{eq.heb} w.r.t.\ $\normt{\cdot}$ for some $\rho>0$ and $\gamma \in [0,1]$, 
and $\calC$ is $(\mu,p)$-uniformly convex w.r.t.\ $\normt{\cdot}$ for some $\mu>0$ and $p\ge 2$, then the $(q,r)$-weak growth condition in~\eqref{eq:weak.growth.cond} holds for $r = \gamma q/p$ and $M =  L\left(2p\rho/\mu\right)^{q/p}.$ 
\end{lemma}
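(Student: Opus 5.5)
Since $\Psi=\iota_\calC$ and $x,s\in\calC$, the $\Psi$-terms in~\eqref{eq:def.D} vanish by convexity of $\calC$. Hence $D(x,s,\theta)=D_f(x+\theta(s-x),x)$, and both parts reduce to bounding this Bregman divergence.

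\textbf{First claim.} I would bound the Bregman divergence using~\eqref{eq.unif.smooth}, which gives a lower bound on $f$ along segments. Dividing that lower bound by $\theta$ and letting $\theta\downarrow 0$ (with the roles of the endpoints chosen suitably) should yield the standard upper bound
\[
D_f(y,x)\le \frac{L}{q}\|y-x\|^q,\qquad x,y\in\calC.
\]
Concretely, I would rearrange~\eqref{eq.unif.smooth} into
\[
f(y)\le f(x)+\frac{f(x+t(y-x))-f(x)}{t}+\frac{L}{q}(1-t)\|y-x\|^q
\]
and let $t\downarrow 0$; the difference quotient tends to $\ip{\nabla f(x)}{y-x}$. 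Applying this with $y=x+\theta(s-x)$ gives
\[
D(x,s,\theta)\le \frac{L}{q}\theta^q\|s-x\|^q\le \frac{L\,\diam_{\normt{\cdot}}(\calC)^q\,\theta^q}{q},
\]
which is~\eqref{eq:r=0}.

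\textbf{Second claim.} From the previous step, $D(x,s,\theta)\le \frac{L}{q}\theta^q\|s-x\|^q$. It therefore suffices to prove
\[
\|s-x\|^q\,\subopt(x)^{1-r}\le (2p\rho/\mu)^{q/p}\,\gap(x,g).
\]
Since $\gap(x,g)=\wg(x)=\ip{g}{x-s}$ here, the task is to relate $\|x-s\|$ to $\ip{g}{x-s}$ and $\subopt(x)$.
\begin{enumerate}
\item \emph{Uniform convexity.} By the inequality in the remark after Definition~\ref{def:uniformly_cvx_set} (\cite[Lemma~2.1]{Kerd_21}), applied with the linear function $g$ and minimizer $s$ over $\calC$,
\[
\ip{g}{x-s}\ge \frac{\mu}{2p}\normt{g}_*\|x-s\|^p.
\]
\item \emph{Dual norm of the gradient.} I would lower-bound $\normt{g}_*$ via convexity and the error bound. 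Take $x^*\in\calX^*$ nearest to $x$. Then
\[
\subopt(x)=f(x)-f(x^*)\le \ip{g}{x-x^*}\le \normt{g}_*\,\dist(x,\calX^*)\le \normt{g}_*\,\rho\,\subopt(x)^\gamma,
\]
so $\normt{g}_*\ge \subopt(x)^{1-\gamma}/\rho$.
\item \emph{Combining.} Steps 1 and 2 give
\[
\|x-s\|^p\le \frac{2p\rho}{\mu}\cdot\frac{\gap(x,g)}{\subopt(x)^{1-\gamma}}.
\]
Raising to the power $q/p\le 1$ yields
\[
\|x-s\|^q\le (2p\rho/\mu)^{q/p}\,\gap^{q/p}\,\subopt^{-(1-\gamma)q/p}.
\]
\item \emph{Matching the target.} Multiply by $\subopt(x)^{1-r}$ with $r=\gamma q/p$. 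The exponent of $\subopt$ becomes $1-\gamma q/p-(1-\gamma)q/p=1-q/p$. So I need
\[
\gap^{q/p}\,\subopt^{1-q/p}\le \gap,
\]
which holds because $\subopt(x)\le\gap(x,g)$ and $1-q/p\ge 0$.
\end{enumerate}
The case $x\in\calX^*$ is trivial, since then $\subopt(x)=0$ and $\gap(x,g)=0$ make both sides of~\eqref{eq:weak.growth.cond} vanish.

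\textbf{Main obstacle.} The delicate step is the exponent bookkeeping in steps 3 and 4. It requires $q\le p$, which holds because $q\le 2\le p$, and it requires $r=\gamma q/p<1$ for the definition to apply. If $\gamma=1$ and $q=p=2$ this gives $r=1$, which formally falls outside Definition~\ref{def:weak.growth.cond}; the inequality~\eqref{eq:weak.growth.cond} itself still holds in that case. The derivation of the Bregman bound from~\eqref{eq.unif.smooth} in the first claim is routine by comparison.
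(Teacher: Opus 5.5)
Your proof is correct. It derives $D_f(y,x)\le \frac{L}{q}\|y-x\|^q$ from uniform smoothness for the first claim, then combines the uniform-convexity inequality $\ip{g}{x-s}\ge \frac{\mu}{2p}\normt{g}_*\|x-s\|^p$ with $\normt{g}_*\ge \subopt(x)^{1-\gamma}/\rho$ and $\subopt(x)\le \gap(x,g)$, which yields exactly $M=L(2p\rho/\mu)^{q/p}$. The paper gives no proof of its own and simply cites Propositions~2 and~5 of~\cite{Pena_23}; your derivation is essentially the argument behind those results.
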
  

\begin{remark}
Note that~\eqref{eq:r=0} is referred to as the $(q,0)$-strong growth condition in~\cite{Pena_23}, and it holds 
regardless of the $(\mu,p)$-uniform convexity of $\calC$ and the $(\rho,\gamma)$-H\"olderian error bound in~\eqref{eq.heb}.  In fact, it was shown in~\cite[Theorems~1 and 2]{Pena_23} that under this condition and proper choices of $\theta_k$ in Algorithm~\ref{algo:CG}, we have a $O(k^{-(q-1)})$ convergence rate for both the sequence of sub-optimality gaps and certain  sequence of duality gaps. In addition, in~\cite[Theorems~3 and 4]{Pena_23}, it was shown that under the $(q,r)$-weak growth condition in~\eqref{eq:weak.growth.cond} and proper choices of $\theta_k$ in Algorithm~\ref{algo:CG}, the sequence of sub-optimality gaps converge at a rate of $O(k^{-\frac{q-1}{1-\gamma q/p}})$, but no convergence rate was obtained for any sequence of duality gaps in~\cite{Pena_23} (as well as in the previous works~\cite{Garber_15,Xu_18,Kerd_21}). 
Observe that  the weak growth condition is valuable mainly in the case where $\gamma>0$, since it  leads to an improvement of the convergence rate of the sub-optimality gaps. 
Indeed, when $\gamma=0$, we can directly leverage~\eqref{eq:r=0} to obtain the $O(k^{-(q-1)})$ convergence rate for both the sequence of sub-optimality gaps and certain  sequence of duality gaps, as mentioned above.  That said, note that for certain problem instances, the value of $M$ in the $(q,0)$-weak growth condition 
can be significantly smaller than its value in the $(q,0)$-strong growth condition. In this situation, it may be more advantageous to use the weak growth condition, instead of its strong counterpart, to obtain a constant-tighter   $O(k^{-(q-1)})$ convergence rate of the sub-optimality gaps. 
\end{remark}

Based on Lemma~\ref{lem:weak_growth} and Theorems~\ref{thm:rate_exact_LS} and~\ref{thm:rate_Armijo}, we have the following corollary.

\begin{corollary} \label{cor:exact_LS}
Let $\normt{\cdot}$ be a norm on $\bbR^n$, $\calC\subseteq \bbR^n$ be a nonempty convex compact set,  and $\Psi = \iota_\calC$. Also, let  $f$ be $(L,q)$-uniformly smooth on $\calC$ w.r.t.\ $\normt{\cdot}$ 
for some $L>0$ and $q=2$, and satisfy the $(\rho,\gamma)$-H\"olderian error bound~\eqref{eq.heb} w.r.t.\ $\normt{\cdot}$ for some $\rho>0$ and $\gamma \in [0,1]$. 
In addition, let $\calC$ be $(\mu,p)$-uniformly convex w.r.t.\ $\normt{\cdot}$ for some $\mu>0$ and $p\ge 2$.  If $\theta_k$ is chosen according to~\eqref{eq:exact_LS} or~\eqref{eq:theta_k_crit} in Algorithm~\ref{algo:CG}, then $\{G_k^{\rm best}\}_{0\le k \le K}$ has 
a convergence rate of $O(k^{-\frac{1}{1-2\gamma /p}})$. 
\end{corollary}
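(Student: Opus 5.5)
The plan is to plug Lemma~\ref{lem:weak_growth} into Theorems~\ref{thm:rate_exact_LS} and~\ref{thm:rate_Armijo}. With $q=2$, the second part of Lemma~\ref{lem:weak_growth} shows that the $(2,r)$-weak growth condition~\eqref{eq:weak.growth.cond} holds with $r=2\gamma/p$ and $M=L(2p\rho/\mu)^{2/p}$. We need $r\in[0,1)$ so that the theorems apply. Since $\gamma\in[0,1]$ and $p\ge 2$, we have $r\in[0,1]$, and $r<1$ unless $\gamma=1$ and $p=2$.

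The main obstacle is the boundary case $\gamma=1$, $p=2$, which gives $r=1$. Then the claimed rate exponent $\frac{1}{1-2\gamma/p}$ is infinite, so I would read the claim as concerning the regime $2\gamma<p$ and state this assumption explicitly. (In the case $r=1$ one instead gets the strong growth condition and linear rates, which is covered by~\cite{Pena_23}.) From now on, assume $r<1$.

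Next, check the regime condition $r+q>2$ used in the last two bounds of each theorem. With $q=2$ this reads $r>0$, i.e., $\gamma>0$. If $\gamma>0$, Theorems~\ref{thm:rate_exact_LS} and~\ref{thm:rate_Armijo} give
\[
\min_{i=k_0+1,\ldots,k} G_i = O\big((k-k_0-1)^{-\frac{1}{1-r}}\big).
\]
They also give the geometric bounds $G_k\le C\rho_0^k h_0$ for $k\le k_0$, where $\rho_0=1/q$ or $1-\beta c$, and a bound on $G_{k_0+1}$.

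These pieces combine into a bound on $G_k^{\rm best}=\min_{i\le k}G_i$. For $k\le k_0+1$, use the geometric bounds and the $G_{k_0+1}$ bound; a geometric rate is dominated by $C'k^{-1/(1-r)}$ for a suitable constant. For $k\ge 2k_0+4$, say, we have $k-k_0-1\ge k/2$, so the polynomial bound gives $O(k^{-1/(1-r)})$. The intermediate finitely many indices, whose number is bounded by the explicit estimate on $k_0$ in the remark after Theorem~\ref{thm:rate_exact_LS}, are absorbed into the constant. Since $G_k^{\rm best}$ is nonincreasing, this yields $G_k^{\rm best}=O(k^{-\frac{1}{1-2\gamma/p}})$.

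It remains to handle $\gamma=0$, i.e., $r=0$ and $r+q=2$, where the $r+q>2$ bounds are unavailable. Here I would use the first part of Lemma~\ref{lem:weak_growth}: the $(2,0)$-strong growth condition~\eqref{eq:r=0} holds with $M=L\,\diam(\calC)^2$. The duality-gap rates of~\cite[Theorems~1,2]{Pena_23} then give $O(k^{-1})$, which matches $k^{-1/(1-0)}$. Alternatively, I would note that the weak growth condition holds for any smaller parameter $r'$, since $\subopt$ is bounded on compact $\calC$, but that would not give the sharp exponent. So I would rely on the cited strong-growth result for this case.
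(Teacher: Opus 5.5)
Your proposal follows the paper's proof. For $\gamma>0$, Lemma~\ref{lem:weak_growth} gives the $(2,2\gamma/p)$-weak growth condition, and Theorems~\ref{thm:rate_exact_LS} and~\ref{thm:rate_Armijo} apply in the regime $r+q>2$. For $\gamma=0$, you appeal to a known $O(1/k)$ rate. You are more careful than the paper on two points:
\begin{enumerate}[label=(\roman*)]
\item The paper's proof never checks $r<1$. You correctly observe that $\gamma=1$, $p=2$ gives $r=1$, which lies outside both theorems (and outside the definition used in Lemma~\ref{lem:weak_growth}) and makes the claimed exponent undefined. Restricting to $2\gamma<p$ is therefore necessary, not just convenient.
\item The paper leaves implicit how the separate bounds become one bound on $G_k^{\rm best}$. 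You spell it out: the geometric bounds for $k\le k_0$, the bound on $G_{k_0+1}$, and the bound in $k-(k_0+1)$ combine, via the a priori bound on $k_0$, into a single $O(k^{-1/(1-r)})$ bound.
\end{enumerate}

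The one step to tighten is $\gamma=0$. The paper appeals directly to the standard $O(1/k)$ rate for $G_k^{\rm best}$, citing \cite[Proposition~4.3]{Bach_15}. You instead cite the strong-growth duality-gap theorems of \cite{Pena_23}. By the paper's own account, though, the duality-gap sequence analyzed there is $\tilG_k=P(x_k)+\min_{i\le k}d(g_i)$. Since $\tilG_k\le G_k^{\rm best}$, a rate for $\tilG_k$ does not transfer to $G_k^{\rm best}$.

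You can either cite a result stated for $\min_{i\le k}G_i$, as the paper does, or derive it in a few lines:
\begin{enumerate}[label=(\arabic*)]
\item Plug \eqref{eq:r=0} with $M=L\,\diam_{\normt{\cdot}}(\calC)^2$ into \eqref{eq:gap.reduction}.
\item For exact line search this gives $h_k-h_{k+1}\ge\min\{G_k/2,\,G_k^2/(2M)\}$.
\item For the Armijo rule \eqref{eq:theta_k_crit}, the same argument as in \eqref{eq:bar_theta_lb} gives $h_k-h_{k+1}\ge\beta c\min\{G_k,\,2(1-\beta)G_k^2/M\}$.
\item Together with $G_k\ge h_k$, this yields $h_k=O(1/k)$.
\item Summing over $i=\ceil{(k+1)/2},\dots,k$, as in the proof of Lemma~\ref{lem:rate_G_k}, then gives $\min_{i\le k}G_i=O(1/k)$.
\end{enumerate}

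Your fallback of passing to a smaller $r'$ is vacuous here. We already have $r=0$, and the theorems would need a larger $r$, not a smaller one.
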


\begin{proof}
We first consider the case where $\gamma=0$. Since $\calC\subseteq \bbR^n$ is convex compact and $f$ is $L$-smooth on $\calC$, the $O(k^{-1})$ convergence rate of $\{G_k^{\rm best}\}_{0\le k \le K}$ is rather standard in the literature (see e.g.,~\cite[Proposition 4.3]{Bach_15}). We next consider the case where $\gamma>0$.
Since $q=2$, from Lemma~\ref{lem:weak_growth}, we know that the $(2,r)$-weak growth condition in~\eqref{eq:weak.growth.cond} holds for $r = 2\gamma /p>0$. Since $r+q>2$, we invoke    Theorems~\ref{thm:rate_exact_LS} and~\ref{thm:rate_Armijo} to complete the proof. 
\end{proof}

\bibliographystyle{abbrv}
\bibliography{math_opt}

\begin{thebibliography}{10}

\bibitem{Bach_15}
F.~Bach.
\newblock Duality between subgradient and conditional gradient methods.
\newblock {\em SIAM J. Optim.}, 25(1):115--129, 2015.

\bibitem{Borwein_14}
J.~M. Borwein, G.~Li, and L.~Yao.
\newblock Analysis of the convergence rate for the cyclic projection algorithm
  applied to basic semialgebraic convex sets.
\newblock {\em SIAM J. Optim.}, 24(1):498--527, 2014.

\bibitem{Frank_56}
M.~Frank and P.~Wolfe.
\newblock An algorithm for quadratic programming.
\newblock {\em Nav. Res. Logist. Q.}, 3(1‐2):95--110, 1956.

\bibitem{Freund_16}
R.~M. Freund and P.~Grigas.
\newblock New analysis and results for the frank–wolfe method.
\newblock {\em Math. Program.}, 155:199–--230, 2016.

\bibitem{Garber_15}
D.~Garber and E.~Hazan.
\newblock Faster rates for the frank-wolfe method over strongly-convex sets.
\newblock In {\em Proc. ICML}, pages 541--549, 2015.

\bibitem{Ghad_19}
S.~Ghadimi.
\newblock Conditional gradient type methods for composite nonlinear and
  stochastic optimization.
\newblock {\em Math. Program.}, 173:431--–464, 2019.

\bibitem{Jaggi_13}
M.~Jaggi.
\newblock Revisiting {Frank-Wolfe}: Projection-free sparse convex optimization.
\newblock In {\em Proc.\ ICML}, pages 427--435, 2013.

\bibitem{Kerd_21}
T.~Kerdreux, A.~d'Aspremont, and S.~Pokutta.
\newblock Projection-free optimization on uniformly convex sets.
\newblock In {\em Proc. AISTATS}, pages 19--27, 2021.

\bibitem{Levitin_66}
E.~S. Levitin and B.~T. Polyak.
\newblock Constrained minimization methods.
\newblock {\em U.S.S.R. Comput. Math. Math. Phys.}, 6(5):1--50, 1966.

\bibitem{Nest_18}
Y.~Nesterov.
\newblock Complexity bounds for primal-dual methods minimizing the model of
  objective function.
\newblock {\em Math. Program.}, 171:311--–330, 2018.

\bibitem{Pena_23}
J.~F. Pe{\~n}a.
\newblock Affine invariant convergence rates of the conditional gradient
  method.
\newblock {\em SIAM J. Optim.}, 33(4):2654--2674, 2023.

\bibitem{Rock_70}
R.~T. Rockafellar.
\newblock {\em Convex analysis}.
\newblock Princeton University Press, 1970.

\bibitem{Xu_18}
Y.~Xu and T.~Yang.
\newblock Frank-wolfe method is automatically adaptive to error bound
  condition.
\newblock arXiv:1810.04765, 2018.

\end{thebibliography}

\end{document}